\documentclass[11pt,reqno]{amsart}
\pdfoutput=1
\pdfminorversion=7

\usepackage{amssymb,latexsym}
\usepackage{graphicx}

\usepackage[usenames]{xcolor}
\usepackage{amssymb}
\usepackage{mathtools}
\usepackage{amsthm}
\usepackage{amsfonts}
\usepackage{amscd}
\usepackage{graphicx}

\usepackage{fullpage}
\usepackage{float}

\usepackage[colorlinks=true,
linkcolor=webgreen,
citecolor=webgreen]{hyperref}

\definecolor{webgreen}{rgb}{0,.5,0}

\usepackage{booktabs}
\usepackage{enumitem}

\usepackage{tikz}
\usetikzlibrary{matrix}

\vbadness=1200

\textwidth=6.175in
\textheight=9.0in
\headheight=13pt
\calclayout

\theoremstyle{plain}
\numberwithin{equation}{section}
\newtheorem{thm}{Theorem}[section]
\newtheorem{theorem}[thm]{Theorem}
\newtheorem{lemma}[thm]{Lemma}
\newtheorem{proposition}[thm]{Proposition}
\newtheorem{conjecture}[thm]{Conjecture}

\theoremstyle{definition}
\newtheorem{definition}[thm]{Definition}
\newtheorem{example}[thm]{Example}

\def\modd#1 #2{#1\ \mbox{\rm (mod}\ #2\mbox{\rm )}}

\begin{document}

\title{A Refinement of the $3x+1$ Conjecture}
\author{Roger Zarnowski}
\address{Department of Mathematics and Statistics\\
                Northern Kentucky University\\
                Highland Heights, KY 41017\\
                USA}
\email{zarnowskir1@nku.edu}
\date{\today}

\begin{abstract}
We reformulate the $3x+1$ conjecture by restricting attention to numbers congruent to $2$ (mod $3$). This leads to an equivalent conjecture for positive integers that reveals new aspects of the dynamics of the $3x+1$ problem.  Advantages include a governing function with particularly simple mapping properties in terms of partitions of the set of integers. We use the refined conjecture to obtain a new characterization of $3x+1$ trajectories that shows a special role played by numbers congruent to $2$ or $8$ (mod $9$).  We construct an accelerated iteration whose long-term behavior involves only those numbers.
\end{abstract}

\maketitle

\section{Introduction}
The $3x+1$ problem pertains to iteration of the following function defined on the set $\mathbb{Z}$ of integers:

\begin{displaymath}
T(n)=
\begin{dcases}
  \hspace{0.45cm} \dfrac{n}{2}, & \text{if\ } n\equiv 0 \pmod 2, \\
  \dfrac{3n+1}{2},   & \text{if\ } n\equiv 1 \pmod 2.
\end{dcases}
\end{displaymath}

We adopt the notation $T^{\,0}(n)=n$ and $T^{\,k+1}(n)=T(T^{\,k}(n))$ for $k=0, 1, 2, \ldots$.  We refer to the sequence of iterates
$(T^{\,k}(n))_{k\geq 0}$ as the $T$-trajectory of $n$, with similar terminology for  other functions.

The $3x+1$ conjecture
states that, for every positive integer $n$, the $T$-trajectory of $n$ eventually reaches the cycle
$(2,1)$.  The $3x+1$ problem, to either prove or disprove this conjecture, remains unsolved after decades of attention and continues to be a problem of great interest.
Lagarias \cite{Lag1} provides a comprehensive survey of the problem, with  extensive references and full text of some of the historically most important related articles.

In what follows, we show that the standard formulation of the conjecture in terms of $T$ involves extraneous features that obscure important  aspects of the problem.  We present a modified form of the conjecture that eliminates these unnecessary features and reveals interesting structure in the underlying dynamics.  We derive the refined conjecture and the associated replacement for $T$ in Section \ref{SecRefining}, following some preliminary observations. In Section \ref{SecFeatures} we explore advantages of the new form of the conjecture.  An immediate consequence is that the refined conjecture leads to shorter, smoother trajectories. More significantly, however, the modified function has special mapping properties that are particularly easy to describe and which have no equivalent in the traditional form of the conjecture.  We use those properties in Section \ref{SecReversion} to obtain a new characterization of $T$-trajectories, showing that trajectories are dominated by numbers congruent to $2$ or $8$ (mod $9$).  We then construct an accelerated iteration that effectively involves only those numbers. In Section \ref{SecComp} we present some computational observations.  We close in Section \ref{SecSummary} with ideas for further exploration.

\section{Refining the \texorpdfstring{$3x+1$}{3x+1} iteration}\label{SecRefining}

We begin with some simple facts that follow from the definition of $T$.

\begin{proposition}\label{PropA}
The $T$-trajectory of any nonzero integer $n$ consists of finitely many numbers congruent to $\modd 0 3$ followed only by numbers congruent to $1$ or $\modd 2 3$.
\end{proposition}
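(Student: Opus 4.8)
The plan is to analyze the action of $T$ on residue classes modulo $6$, since the branch of $T$ is selected by $n \bmod 2$ while the claim concerns $n \bmod 3$; working modulo $6$ captures both at once. First I would tabulate $T(n) \bmod 3$ for each residue of $n$ modulo $6$. A direct computation gives three facts: an even multiple of $3$ (that is, $n \equiv 0 \pmod 6$) satisfies $T(n)=n/2 \equiv 0 \pmod 3$, so it maps to another multiple of $3$; an odd multiple of $3$ (that is, $n \equiv 3 \pmod 6$) satisfies $T(n)=(3n+1)/2 \equiv 2 \pmod 3$; and every $n$ with $n \equiv 1$ or $2 \pmod 3$ maps to a value that is again $\not\equiv 0 \pmod 3$ (in fact to $1$ or $2 \pmod 3$).

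From this table the two halves of the proposition follow. The closure statement --- that numbers congruent to $1$ or $2 \pmod 3$ are followed only by such numbers --- is exactly the assertion that the residue set $\{1,2\} \pmod 3$ is invariant under $T$, which I read off directly from the rows with $n \not\equiv 0 \pmod 3$. This settles the ``followed only by'' clause.

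For the finiteness clause I would argue by descent on the $2$-adic valuation $v_2$. Suppose $n \equiv 0 \pmod 3$, since otherwise the trajectory contains no multiple of $3$ and there is nothing to prove. While the current iterate is an even multiple of $3$, the map $T$ merely halves it, yielding another multiple of $3$ whose $2$-adic valuation has dropped by exactly one; because $n \neq 0$, this valuation is finite, so after $v_2(n)$ halvings the iterate becomes an odd multiple of $3$. The odd-multiple row of the table then sends it to a value congruent to $2 \pmod 3$, after which the invariance established above forbids any later multiple of $3$. Hence the trajectory meets the multiples of $3$ in a single initial block of at most $v_2(n)+1$ terms, followed only by numbers congruent to $1$ or $2 \pmod 3$.

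There is no serious obstacle here: the content is a finite case check paired with a monovariant, and the nonzero hypothesis enters precisely to keep $v_2(n)$ finite. The one point requiring care is verifying that halving preserves divisibility by $3$ --- equivalently, that an even multiple of $3$ is a multiple of $6$ --- so that the descent never leaves the multiples of $3$ before reaching the odd one; this is what guarantees that the initial block is contiguous rather than interrupted.
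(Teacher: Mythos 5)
Your proof is correct and is essentially the paper's own argument: your descent on the $2$-adic valuation is exactly the paper's decomposition $n=2^k m$ with $m$ odd (halvings preserve divisibility by $3$ until the odd part is reached), and your mod-$6$ table packages the same key facts the paper uses, namely that an odd multiple of $3$ maps out of $[0]_3$ because $3x+1\not\equiv 0 \pmod 3$, and that non-multiples of $3$ form a $T$-invariant set.
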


\begin{proof}
We may write any nonzero $n$ in the form $n=2^k m$, with $k\geq 0$ and $m$ odd.  The first $k$ iterates of $n$ are then obtained by divisions by $2$, so these numbers are either all congruent to $0$ (mod $3$) or all not congruent to $0$ (mod $3$).  The $k^{th}$ iterate is $m$, which is odd.  But no iterate of an odd number is divisible by $3$, since $3n+1$ is never divisible by $3$ and such divisibility is not affected by subsequent divisions by $2$.
\end{proof}

Numbers divisible by $3$ are therefore transient elements of the  $T$-trajectory of any nonzero $n$, appearing at most as initial terms of such a trajectory. For purposes of exploring the long-term behavior of trajectories, we may then restrict our attention to numbers congruent to $1$ or $2$ (mod $3$).

\begin{proposition}\label{PropB}
If $n\equiv \modd 1 3$, then $T(n)\equiv \modd 2 3$.
\end{proposition}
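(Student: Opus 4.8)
The plan is to argue by cases on the parity of $n$, since the definition of $T$ branches precisely on whether $n$ is even or odd. In each case I will compute $T(n)$ modulo $3$ directly, invoking the hypothesis $n \equiv 1 \pmod 3$ where it is needed, and exploiting the elementary fact that $2$ is its own inverse modulo $3$; that is, $2x \equiv 1 \pmod 3$ forces $x \equiv 2 \pmod 3$. This lets me avoid fractional reasoning and instead clear the denominator by multiplying the relevant iterate by $2$.

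First I would handle the even case: if $n$ is even, then $T(n) = n/2$ is a genuine integer, and doubling it gives $2\,T(n) = n \equiv 1 \pmod 3$ by hypothesis. Multiplying this congruence through by $2$ then yields $T(n) \equiv 2 \pmod 3$, as desired. Next I would treat the odd case: if $n$ is odd, then $3n+1$ is even, so $T(n) = (3n+1)/2$ is again an integer, and $2\,T(n) = 3n+1 \equiv 1 \pmod 3$ because $3n \equiv 0 \pmod 3$. Multiplying once more by $2$ gives $T(n) \equiv 2 \pmod 3$, completing the case analysis.

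As for obstacles, there really is none: this is a routine two-line verification. The only two points that warrant any care are, first, confirming from the parity assumption that $n/2$ and $(3n+1)/2$ are integers in their respective branches, so that the congruences are legitimate statements about integers rather than about rationals; and second, handling the ``division by $2$'' correctly in modular arithmetic by clearing denominators and using the invertibility of $2$ modulo $3$. It is worth noting that in the odd branch the conclusion in fact holds for \emph{every} odd $n$, irrespective of its residue mod $3$, since $3n+1 \equiv 1 \pmod 3$ unconditionally; thus the hypothesis $n \equiv 1 \pmod 3$ is genuinely used only in the even branch.
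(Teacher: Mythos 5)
Your proof is correct and takes essentially the same approach as the paper's: a case split on the parity of $n$ followed by a direct computation of $T(n)$ modulo $3$. The only cosmetic difference is that the paper writes $n$ in the forms $6j+1$ and $6j+4$ and computes $T(n)$ explicitly as $9j+2$ and $3j+2$, whereas you clear the denominator using $2^{-1}\equiv 2 \pmod 3$; your closing observation that the odd case needs no hypothesis matches the paper's remark (used in Proposition \ref{PropA}) that $T$ sends any odd number into $[2]_3$.
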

\begin{proof}
If $n\equiv 1$ (mod $3$), then $n$ is of the form $6j+1$ or $6j+4$. But $T(6j+1)=9j+2$ and $T(6j+4)=3j+2$, each of which is congruent to $2$ (mod $3$).
\end{proof}

We refer to numbers congruent to $1$ (mod $3$) as \emph{isolated} since no two such numbers appear consecutively in a $T$-trajectory.  This is a term we will use again in subsequent sections.

Proposition \ref{PropB} suggests that, in addition to disregarding the transient numbers divisible by $3$, further simplification may be obtained by advancing the iteration one step past numbers congruent to $1$ (mod $3$).  This requires the additional observation that the only predecessor of $3n+1$ in a $T$-trajectory is the even number $6n+2$, since any odd number $2j+1$ iterates to $3j+2 \not\in \{3n+1\}$.  For brevity, we also introduce the notation $[j]_m$ to denote the set of integers congruent to $j$ (mod $m$).

\begin{definition}\label{DefA}
Let $F:[2]_3\rightarrow [2]_3$ be defined by
\begin{equation}\label{EqnF}
F(n)=
\begin{dcases}
T(n), & \text{\ if \ } n\equiv 5 \pmod 6 \\
T^{\,2}(n), & \text{\ if \ } n\equiv 2 \pmod 6.
\end{dcases}
\end{equation}
\end{definition}
For $n\in [2]_3$, the $F$-trajectory of $n$ is then the same as the $T$-trajectory of $n$ with numbers congruent to $1$ (mod $3$) removed.

\begin{lemma}\label{LemA}
The $3x+1$ conjecture is true if and only if the $F$-trajectory of any positive integer in $[2]_3$ converges to $2$.
\end{lemma}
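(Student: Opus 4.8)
The plan is to reduce both directions of the biconditional to the structural correspondence already announced after Definition~\ref{DefA}: for $n \in [2]_3$, the $F$-trajectory of $n$ is exactly the $T$-trajectory of $n$ with the isolated numbers (those $\equiv 1 \pmod 3$) deleted. I would first make this correspondence precise and prove it, since once it is available both directions follow almost immediately.

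First I would verify that $F$ genuinely maps $[2]_3$ into itself and record the local behavior. Every $n \in [2]_3$ satisfies $n \equiv 2$ or $5 \pmod 6$. If $n \equiv 5 \pmod 6$, writing $n = 6j+5$ gives $T(n) = 9j+8 \equiv 2 \pmod 3$, so $F(n) = T(n) \in [2]_3$ and a single $T$-step stays in $[2]_3$. If $n \equiv 2 \pmod 6$, writing $n = 6j+2$ gives $T(n) = 3j+1 \equiv 1 \pmod 3$, an isolated number, and then Proposition~\ref{PropB} gives $T^2(n) \equiv 2 \pmod 3$, so $F(n) = T^2(n) \in [2]_3$ and a single $F$-step corresponds to two $T$-steps that skip exactly one isolated number. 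Since by Proposition~\ref{PropA} no iterate of $n \in [2]_3$ is divisible by $3$, and by Proposition~\ref{PropB} isolated numbers never occur consecutively, an easy induction on the number of $F$-steps shows that $(F^k(n))_{k\ge 0}$ is precisely the subsequence of $(T^j(n))_{j\ge 0}$ obtained by deleting the terms $\equiv 1 \pmod 3$. I would also note that $T$ preserves positivity, so every iterate of a positive integer is positive, and that $2$ is a fixed point of $F$ corresponding to the $T$-cycle $(2,1)$: indeed $F(2) = T^2(2) = T(1) = 2$, while in the $T$-trajectory $2 \mapsto 1 \mapsto 2$ is the cycle with $1$ the deleted isolated term.

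With the correspondence in hand, the forward direction is direct: assuming the $3x+1$ conjecture, the $T$-trajectory of any positive $n \in [2]_3$ is eventually the periodic tail $2,1,2,1,\dots$, and deleting the isolated $1$'s leaves an $F$-trajectory that is eventually constant at $2$, i.e.\ converges to $2$. For the converse, given a positive integer $n$, Proposition~\ref{PropA} guarantees that after finitely many transient multiples of $3$ the trajectory enters $[1]_3 \cup [2]_3$, and Proposition~\ref{PropB} guarantees it then reaches a positive element $m \in [2]_3$ (any intervening term $\equiv 1 \pmod 3$ maps into $[2]_3$ in one more step). By hypothesis the $F$-trajectory of $m$ converges to $2$; by the correspondence this is the $T$-trajectory of $m$ with isolated terms removed, so the $T$-trajectory of $m$—and hence of $n$—eventually reaches $2$ and thereafter follows the cycle $(2,1)$, establishing the conjecture.

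The only substantive work is the structural correspondence, and the main point to handle carefully there is the bookkeeping in the induction: tracking residues mod $6$ at each step to decide whether $F$ advances one or two $T$-steps, and confirming that the deleted terms are exactly the isolated numbers with no two consecutive deletions. The remaining positivity and ``reaching $[2]_3$'' arguments are routine consequences of Propositions~\ref{PropA} and \ref{PropB}, so I would not expect a genuine obstacle beyond setting up that correspondence cleanly.
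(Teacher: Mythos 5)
Your proof is correct and follows essentially the same route as the paper: both directions rest on the fact that the $F$-trajectory of $n\in[2]_3$ is the $T$-trajectory with the terms $\equiv 1 \pmod 3$ deleted, with the converse using Propositions \ref{PropA} and \ref{PropB} to steer an arbitrary positive integer into $[2]_3$. The only difference is that you explicitly prove this subsequence correspondence (and the fixed-point check $F(2)=2$), which the paper simply asserts in the remark following Definition \ref{DefA}.
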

\begin{proof}
Suppose the $3x+1$ conjecture holds, so that the $T$-trajectory of any positive $n\in\left[2\right]_3$ eventually reaches $2$.   The $F$-trajectory of $n$ therefore also reaches $2$, which is a fixed point of $F$.

Conversely, suppose the $F$-trajectory of any positive integer in $\left[2\right]_3$ reaches $2$. If $n$ is an arbitrary positive integer, then by Propositions \ref{PropA} and  \ref{PropB}, there is some $k$ such that $T^k(n)\in\left[2\right]_3$. Since the $F$-trajectory of $T^k(n)$ consists of further iterations by $T$, the $T$-trajectory of $T^k(n)$ eventually reaches $2$, which says that the $T$-trajectory of $n$ reaches $2$.
\end{proof}

Having expressed the conjecture in terms only of integers congruent to $2$ (mod $3$), it is now possible to convert once again to an iteration on all of $\mathbb{Z}$.  As we show below, this leads to the following reformulation of the $3x+1$ conjecture.

\begin{conjecture}[The refined $3x+1$ conjecture] \label{ConjR}
Let $T_R:\mathbb{Z}\rightarrow \mathbb{Z}$ be defined by
\begin{equation} \label{EqnTR}
T_R(n)=
\begin{dcases}
  \hspace{0.29cm} \dfrac{3n}{4}, & \text{if $n\equiv 0 \pmod 4$}, \\
  \hspace{0.08cm} \dfrac{n-2}{4}, & \text{if $n\equiv 2 \pmod 4$}, \\
  \dfrac{3n+1}{2},   & \text{if $n\equiv 1 \pmod 2$ }.
\end{dcases}
\end{equation}
Then for every integer $n\geq 0$, the $T_R$-trajectory of $n$ converges to $0$.
\end{conjecture}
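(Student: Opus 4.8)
The plan is to recognize the statement not as something to be proved outright---that would resolve the $3x+1$ problem---but as an equivalent reformulation to be \emph{derived}. Concretely, I would show that $T_R$ is nothing more than the map $F$ of Definition \ref{DefA} rewritten under a relabeling of $[2]_3$ by all of $\mathbb{Z}$, and then invoke Lemma \ref{LemA}. The natural relabeling is the affine bijection $\phi:\mathbb{Z}\to[2]_3$ given by $\phi(k)=3k+2$, with inverse $\phi^{-1}(n)=(n-2)/3$; it sends $0$ to the fixed point $2$ of $F$ and restricts to a bijection from $\{k\geq 0\}$ onto the positive elements of $[2]_3$. The identity to establish is the conjugacy
\begin{equation*}
T_R=\phi^{-1}\circ F\circ\phi .
\end{equation*}

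I would verify this identity by cases according to $k \bmod 4$, which correspond exactly to the branches of $F$. If $k$ is odd then $\phi(k)\equiv 5\pmod 6$, so $F=T$ acts, and a direct computation gives $\phi^{-1}(F(\phi(k)))=(3k+1)/2$, matching the odd branch of $T_R$. If $k$ is even then $\phi(k)\equiv 2\pmod 6$ and $F=T^{\,2}$ acts; here the intermediate iterate $T(\phi(k))$ is the isolated number removed by $F$, and it equals $6i+1$ when $k=4i$ and $6i+4$ when $k=4i+2$---precisely the two forms of numbers congruent to $1\pmod 3$ appearing in Proposition \ref{PropB}. Completing the second step of $T^{\,2}$ and applying $\phi^{-1}$ then yields $3k/4$ in the first case and $(k-2)/4$ in the second, matching the $k\equiv 0$ and $k\equiv 2\pmod 4$ branches of $T_R$. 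These are routine arithmetic checks.

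With the conjugacy in hand, the $T_R$-trajectory of $k$ and the $F$-trajectory of $\phi(k)$ are term-by-term images of one another under $\phi$, so the former converges to $0$ exactly when the latter converges to $\phi(0)=2$. Combined with the range correspondence $\{k\geq 0\}\leftrightarrow\{n\in[2]_3: n>0\}$, this shows that ``every $T_R$-trajectory from $n\geq 0$ converges to $0$'' is equivalent to ``every $F$-trajectory from a positive element of $[2]_3$ converges to $2$,'' which by Lemma \ref{LemA} is equivalent to the $3x+1$ conjecture. The only genuine obstacle is therefore not in this derivation, which is elementary, but in the convergence assertion itself: establishing it is exactly as hard as the original $3x+1$ problem, which is why the statement is offered as a conjecture rather than a theorem.
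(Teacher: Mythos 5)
Your proposal is correct and follows essentially the same route as the paper: the map $\phi(k)=3k+2$ is exactly the paper's conjugation $S$, the case analysis modulo $4$ reproduces the paper's verification that $S^{-1}\circ F\circ S = T_R$, and the appeal to Lemma \ref{LemA} mirrors the paper's proof of Theorem \ref{ThmA}. You also correctly identify that the conjecture itself is not provable by these means but is only shown equivalent to the original $3x+1$ conjecture, which is precisely the paper's stance.
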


\begin{theorem}\label{ThmA}
The $3x+1$ conjecture is true if and only if the refined $3x+1$ conjecture is true.
\end{theorem}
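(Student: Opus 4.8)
The plan is to exhibit $T_R$ as the conjugate of $F$ under a simple affine bijection, so that the desired equivalence collapses onto Lemma \ref{LemA}. The key observation is that $[2]_3=\{3m+2 : m\in\mathbb{Z}\}$, so the map $\phi:\mathbb{Z}\to[2]_3$ given by $\phi(m)=3m+2$ is a bijection, with inverse $\phi^{-1}(n)=(n-2)/3$. Under this correspondence the fixed point $2$ of $F$ pulls back to $0$, and $\phi$ restricts to a bijection from the nonnegative integers onto the positive integers of $[2]_3$.

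First I would establish the conjugacy identity $\phi\circ T_R=F\circ\phi$, equivalently $T_R=\phi^{-1}\circ F\circ\phi$, by checking it residue class by residue class. Fix $m$ and set $n=\phi(m)=3m+2$. The residue of $n$ modulo $6$ is governed by the parity of $m$: an odd $m$ gives $n\equiv 5\pmod 6$ and an even $m$ gives $n\equiv 2\pmod 6$, matching the two branches of $F$ in \eqref{EqnF}. In the odd case $F(n)=T(n)=(3n+1)/2$, and a direct computation gives $\phi^{-1}(F(n))=(3m+1)/2$, which is exactly the odd branch of $T_R$ in \eqref{EqnTR}. In the even case $F(n)=T^{\,2}(n)$, and here the computation splits according to whether the intermediate iterate $T(n)$ is odd or even; this dichotomy corresponds precisely to $m\equiv 0\pmod 4$ versus $m\equiv 2\pmod 4$, and the two resulting values of $\phi^{-1}(F(n))$ are $3m/4$ and $(m-2)/4$, reproducing the remaining two branches of $T_R$. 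This verification is the heart of the argument, and I expect the bookkeeping of the four-way residue split arising from the $T^{\,2}$ case to be the one place that needs care.

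Having established the conjugacy, I would iterate it. From $\phi\circ T_R=F\circ\phi$ a straightforward induction yields $\phi\circ T_R^{\,k}=F^{\,k}\circ\phi$ for every $k\geq 0$, so $\phi$ carries the $T_R$-trajectory of $m$ onto the $F$-trajectory of $\phi(m)$. Since $\phi$ is an injective affine map sending $0$ to $2$, the $T_R$-trajectory of $m$ converges to $0$ if and only if the $F$-trajectory of $\phi(m)=3m+2$ converges to $2$.

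Finally I would assemble the pieces. As $m$ ranges over the nonnegative integers, $\phi(m)=3m+2$ ranges over exactly the positive integers in $[2]_3$, so the statement that the $T_R$-trajectory of every integer $n\geq 0$ converges to $0$ is equivalent, term by term, to the statement that the $F$-trajectory of every positive integer in $[2]_3$ converges to $2$. By Lemma \ref{LemA} the latter is equivalent to the $3x+1$ conjecture, and chaining these equivalences gives the theorem.
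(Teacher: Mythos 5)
Your proposal is correct and follows essentially the same route as the paper: the conjugation map $\phi(m)=3m+2$ is exactly the paper's map $S$, the residue-class verification (odd $m$; $m\equiv 0\pmod 4$; $m\equiv 2\pmod 4$) reproduces the paper's three case computations, and the final reduction to Lemma \ref{LemA} is identical. The only cosmetic difference is that the paper defines $G=S^{-1}\circ F\circ S$ and shows $G=T_R$, whereas you verify the equivalent identity $\phi\circ T_R=F\circ\phi$ directly.
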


\begin{proof}
Let $S:\mathbb{Z}\rightarrow [2]_3$ be defined by $S(n)=3n+2$. Then $S$ is bijective and $S^{-1}(n)=\frac{n-2}{3}$. Define $G:\mathbb{Z}\rightarrow \mathbb{Z}$ by  $$G(n)=(S^{-1}\circ F \circ S)(n).$$ Then $F$ and $G$ are topologically conjugate, so the $G$-trajectory of every $n\geq 0$ converges to $0$  if and only if the $F$-trajectory of every positive $S(n)\in \left[2\right]_3$ converges to $S(0)=2$.  By Lemma \ref{LemA}, this, in turn, is equivalent to the $3x+1$ conjecture since every positive element of $[2]_3$ is equal to $S(n)$ for some $n\geq 0$.  We will show that $G=T_R$ as defined above.

First, if $n\equiv 0$ (mod $4$), $n=4j$, then
\begin{align*}
G(n) &= S^{-1}\left(F(3(4j)+2)\right) \\
&= S^{-1}\left(F(12j+2)\right) \\
&= S^{-1}\left(T^{\,2}(12j+2)\right) \\
&= S^{-1}(9j+2) \\
&= 3j \\
&= \frac{3n}{4}.
\end{align*}

Next, if $n\equiv 2$ (mod $4$), $n=4j+2$, then
\begin{align*}
G(n) &= S^{-1}\left(F(3(4j+2)+2)\right) \\
&= S^{-1}\left(F(12j+8)\right) \\
&= S^{-1}\left(T^{\,2}(12j+8)\right) \\
&= S^{-1}(3j+2) \\
&= j \\
&= \frac{n-2}{4}.
\end{align*}

Finally, if $n\equiv 1$ (mod $2$), $n=2j+1$, then
\begin{align*}
G(n) &= S^{-1}\left(F(3(2j+1)+2)\right) \\
&= S^{-1}\left(F(6j+5)\right) \\
&= S^{-1}\left(T(6j+5)\right) \\
&= S^{-1}(9j+8) \\
&= 3j+2 \\
&= \frac{3n+1}{2}.
\end{align*}

\end{proof}

\section{Features of the refined \texorpdfstring{$3x+1$}{3x+1} conjecture}\label{SecFeatures}

By expressing the $3x+1$ conjecture in terms of $T_R$ instead of $T$, we have effectively filtered out from $T$-trajectories the transient numbers divisible by $3$ and the isolated numbers that are congruent to $1$ (mod $3$). The following result for $T_R$ provides an interesting analogue to Proposition \ref{PropB} for $T$.

\begin{proposition} \label{PropC}
If $n\equiv \modd{1} {3}$ , then $T_R(n)\not\equiv \modd{1} {3}$.
\end{proposition}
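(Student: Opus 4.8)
The plan is to prove the statement by a direct case analysis following the three branches in the definition \eqref{EqnTR} of $T_R$. Since the hypothesis $n\equiv 1\pmod 3$ constrains $n$ modulo $3$, while the branch that applies is determined by $n$ modulo $4$, I would keep track of both congruences at once (equivalently, one may reduce modulo $12$, so that $n\equiv 1\pmod 3$ becomes $n\in\{1,4,7,10\}\pmod{12}$). In each case the goal is simply to compute the residue of $T_R(n)$ modulo $3$ and observe that it is never $1$.

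First I would dispose of the two branches in which the hypothesis is not even needed. If $n$ is odd, write $n=2j+1$; then $T_R(n)=\frac{3n+1}{2}=3j+2\equiv 2\pmod 3$. If $n\equiv 0\pmod 4$, write $n=4j$; then $T_R(n)=\frac{3n}{4}=3j\equiv 0\pmod 3$. In both situations the output is forced into residue class $2$ or $0$ modulo $3$ regardless of $n\bmod 3$. The only branch in which the hypothesis does real work is $n\equiv 2\pmod 4$: writing $n=4j+2$ gives $T_R(n)=\frac{n-2}{4}=j$, and here I would use that $4j+2\equiv 1\pmod 3$ forces $j\equiv 2\pmod 3$, so that $T_R(n)=j\equiv 2\pmod 3$.

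Assembling the three cases shows that $T_R(n)$ always lands in residue class $0$ or $2$ modulo $3$, and in particular $T_R(n)\not\equiv 1\pmod 3$, as claimed. I do not anticipate any genuine obstacle here: the argument is routine modular bookkeeping, and the only point requiring a moment of care is to ensure that the congruence $n\equiv 1\pmod 3$ is applied specifically in the $n\equiv 2\pmod 4$ branch, since the other two branches already output a non-$1$ residue on their own. It is worth noting in passing that the computation gives slightly more than the stated conclusion, namely an explicit description of which residue class $T_R(n)$ falls into in each branch, which parallels the sharper information recorded for $T$ in Proposition \ref{PropB}.
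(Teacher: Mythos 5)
Your proof is correct and takes essentially the same approach as the paper: a direct case analysis over the three branches of $T_R$ in Equation (\ref{EqnTR}), computing residues modulo $3$ (the paper organizes the cases as $n=6j+1$, $12j+4$, $12j+10$, which is just your branch-by-branch analysis with the hypothesis folded in from the start). Your observation that the odd and $0 \pmod 4$ branches land in $[2]_3$ and $[0]_3$ unconditionally is a nice refinement, but it is already implicit in the paper's partition mapping (\ref{EqnPartitions}) and Figure \ref{FigPartitions}, so it does not constitute a genuinely different route.
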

\begin{proof}
If $n\equiv \modd{1} {3}$, then $n$ has the form $6j+1$, $12j+4$, or $12j+10$.  But $T_R(6j+1)=9j+2 \equiv \modd{2} {3}$, $T_R(12j+4)=9j+3 \equiv \modd{0} {3}$, and $T_R(12j+10)=3j+2\equiv \modd{2} {3}$.
\end{proof}

By Proposition \ref{PropC}, numbers congruent to $1$ (mod $3$) are isolated terms in $T_R$-trajectories just as they are in $T$-trajectories.

We now explore additional features of the refined mapping $T_R$.  In the subsequent section, we will use these results to obtain new information about $T$-trajectories.

\subsection{Smoothing of trajectories}

  From the proof of Theorem \ref{ThmA}, we have
\[
F(n)=\left(S\circ T_R \circ S^{-1}\right)(n), \text{\ for\ } n\equiv \modd 2 3.
\]
By iterating, it follows that
\begin{equation}\label{EqnSofT}
S\left(T_R^k(n)\right) = F^k\left(S(n)\right), \text{\ for\ }k= 0, 1, 2, \ldots, \text{\ and\ } n\in \mathbb{Z}.
\end{equation}
So elements of a $T_R$-trajectory $\left(n, T_R(n), \ldots \right)$ are mapped by $S$ to $\left(S(n), F(S(n)), \ldots \right)$.  By the definition of $F$, these are the elements of the $T$-trajectory of $S(n)$ that are congruent to $2$ (mod $3$).

\begin{example}
Figure \ref{FigA} illustrates graphically, for $n=65$, both the standard $T$-trajectory and the corresponding $F$-trajectory that is related to $T_R$ by conjugation with $S$.  The terms of the $T$-trajectory, with the elements of the  $F$-trajectory in bold, are
\[
\left(T^k(65)\right)_{k>0}=\left( \mathbf{65}, \mathbf{98}, 49, \mathbf{74}, 37, \mathbf{56}, 28, \mathbf{14}, 7, \mathbf{11}, \mathbf{17}, \mathbf{26}, 13, \mathbf{20}, 10, \mathbf{5}, \mathbf{8}, 4, \mathbf{2}, 1,
\mathbf{2}, \ldots\right). \\
\]
The corresponding $T_R$-trajectory is
\[
\left(T_R^k(21)\right)_{k>0} = \left(21, 32, 24, 18, 4, 3, 5, 8, 6, 1, 2, 0, 0, \ldots\right).
\]
The refined $F$-trajectory is shorter and smoother than the classical $T$-trajectory,  eliminating much oscillatory behavior.  The removed numbers are congruent to $1$ (mod $3$) and so are either of the form $6j+1$ with predecessor $12j+2$ and successor $9j+2$, or the form $6j+4$ with predecessor $12j+8$ and successor $3j+2$.  In either case the central number is less than the mean of its predecessor and successor, so the removed portions of the graph of the $T$-trajectory are always below the graph of the refined trajectory.
\begin{figure}[H]
\begin{center}
    \scalebox{0.65}{\includegraphics{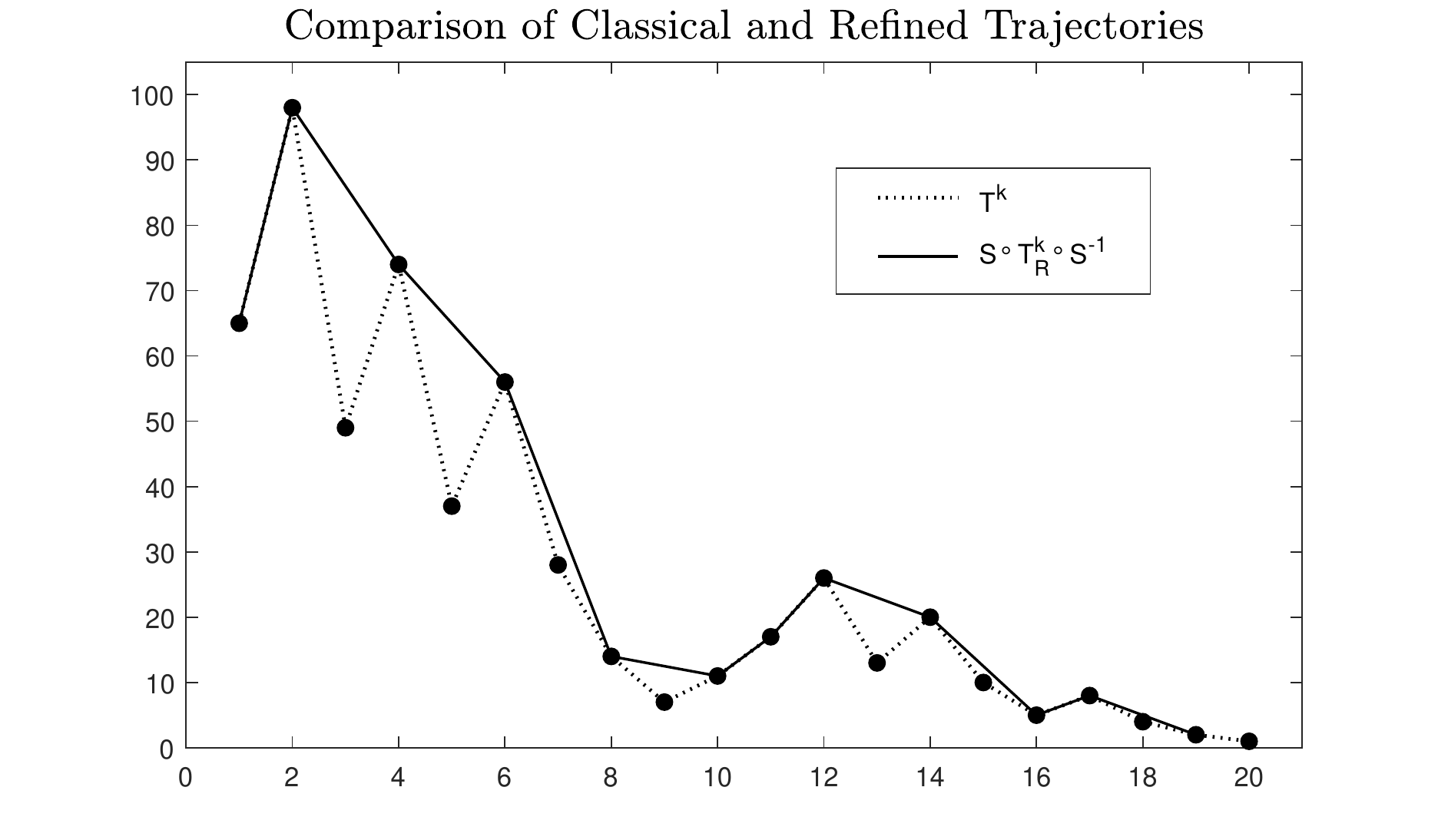}}
\caption{The classical trajectory (dotted) and the refined trajectory (solid) for $n=65$.}
\label{FigA}
\end{center}
\end{figure}
\end{example}

\subsection{Mapping of a 5-partition to a 3-partition}

Consider partitioning the integers into sets of numbers congruent to $1$ (mod $2$),  $0$ (mod $4$) or $2$ (mod $4$).  The latter set can be further partitioned into sets of numbers congruent to $2, 6, \text{\ or \ } 10$ (mod $12$).  This is useful since

\begin{equation}\label{EqnPartitions}
\begin{gathered}
T_R(4j) = T_R(12j+2) = 3j, \\
T_R(12j+6) = 3j+1, \\
T_R(12j+10) = T_R(2j+1) = 3j+2.
\end  {gathered}
\end{equation}
We will find it convenient to rewrite such relationships using the notation $$[b]_a \xrightarrow{f} [d]_c$$ to indicate that $f(aj+b)=cj+d$ for every integer $j$.  The label on the arrow may be dropped when the function $f$ is clear from context.

With this notation, the results expressed in Equations (\ref{EqnPartitions}) are represented in Figure \ref{FigPartitions}. We see that each of five partition subsets of $\mathbb{Z}$ is  mapped element-wise to one of the congruence classes of $3$. This easily described structure holds promise of making the underlying dynamics of the $3x+1$ conjecture amenable to further analysis. We describe one consequence of this structure in the next subsection.

\begin{figure}[H]
\begin{center}
\begin{tikzpicture}
  \matrix (m) [matrix of math nodes,row sep=2em,column sep=-6pt]
  {
     \mathbb{Z} & \hspace{5pt}=\hspace{5pt} & \left[0\right]_4 & \hspace{-5pt}\cup & \hspace{-3pt} \left[2\right]_{12}\ \  & \hspace{-3pt}\cup & \ \ \hspace{-3pt}\left[6\right]_{12} \ \ & \hspace{-3pt}\cup & \ \ \hspace{-3pt}\left[10\right]_{12}\ \ & \hspace{-3pt}\cup & \ \ \hspace{-3pt}\left[1\right]_2 & {}  \\
     {} & {} & {} & \left[0\right]_3 & \ \ \ \ \ \cup & {} & \left[1\right]_3 & {} & \cup \ \ \ \ \ & \hspace{-5pt}\left[2\right]_3 & \hspace{5pt}=\hspace{5pt} & \mathbb{Z} \\};
  \path[-stealth]
    (m-1-3) edge  (m-2-4)
    (m-1-5) edge (m-2-4)
    (m-1-7) edge (m-2-7)
    (m-1-9) edge (m-2-10)
    (m-1-11) edge (m-2-10);
\end{tikzpicture}
\caption{A schematic of the refined $3x+1$ mapping $T_R$.}
\label{FigPartitions}
\end{center}
\end{figure}

\subsection{Symmetries in \texorpdfstring{$T_R$}{TR}-iterates}

The mapping property of $T_R$ as shown in Figure \ref{FigPartitions} suggests that further insights may be obtained by iterating the congruence classes of $3$ to themselves.  We summarize the result of such a process in this section.  Our analysis rests upon the following useful partition:
\begin{align}\label{EqnZpart}
\mathbb{Z} &= [1]_2 \cup [0]_2 \notag\\
    &= [1]_2 \cup [0]_4 \cup [2]_4 \notag\\
    &= [1]_2 \cup [0]_4 \cup [6]_8 \cup [2]_8 \notag\\
    &= [1]_2 \cup [0]_4 \cup [6]_8 \cup [2]_{16} \cup [10]_{16} \notag\\
    &= [1]_2 \cup [0]_4 \cup [6]_8 \cup [2]_{16} \cup [26]_{32} \cup [10]_{32} \notag\\
    &= [1]_2 \cup [0]_4 \cup [6]_8 \cup [2]_{16} \cup [26]_{32} \cup [10]_{64} \cup [42]_{64}.
\end{align}
We now advance the $T_R$ iteration according to the congruence classes in Equation (\ref{EqnZpart}).  This gives an accelerated iteration with special properties.

Define $T_R^*:\mathbb{Z} \rightarrow \mathbb{Z}$ by

\begin{displaymath}
T_R^*(n)=
\begin{dcases}
  T_R(n), & \text{if\ } n \equiv \modd 1 2 \text{\ or\ }  n \equiv \modd 0 4 \\
  T_R^{\,2}(n), & \text{if\ } n \equiv \modd 6 8 \text{\ or\ }  n \equiv \modd 2 {16} \\
  T_R^{\,3}(n), & \text{if\ } n \equiv \modd {26} {32},  n \equiv \modd {10} {64} \text{\ or\ } n \equiv \modd {42} {64}.
\end{dcases}
\end{displaymath}
To describe the mapping properties of $T_R^*$, we first reorder the partition of Equation (\ref{EqnZpart}) as follows:
    \begin{equation*}
    \begin{tikzpicture}
      \matrix (m) [matrix of math nodes,row sep=0em,column sep=-1.5pt]
      {
         \mathbb{Z} & = & \left[0\right]_4 & \cup & \left[2\right]_{16} & \cup &  \left[10\right]_{64} & \cup & \left[42\right]_{64} & \cup & \left[26\right]_{32} & \cup & \left[6\right]_{8} & \cup & \left[1\right]_{2}   \\
      };
    \end{tikzpicture}
    \end{equation*}
We then partition each of these seven subsets into congruence classes (mod $3$), to obtain
    \begin{equation*}
    \begin{tikzpicture}
      \matrix (m) [matrix of math nodes,row sep=0em,column sep=-5pt]
      {
         \left[0\right]_3 & = & \left[0\right]_{12} & \cup & \left[18\right]_{48} & \cup & \left[138\right]_{192} & \cup & \left[42\right]_{192} & \cup & \left[90\right]_{96} & \cup & \left[6\right]_{24} & \cup & \left[3\right]_{6}   \\
         \left[1\right]_3 & = & \left[4\right]_{12} & \cup & \left[34\right]_{48} & \cup & \left[10\right]_{192} & \cup & \left[106\right]_{192} & \cup & \left[58\right]_{96} & \cup & \left[22\right]_{24} & \cup & \left[1\right]_{6}   \\
         \left[2\right]_3 & = & \left[8\right]_{12} & \cup & \left[2\right]_{48} & \cup & \left[74\right]_{192} & \cup & \left[170\right]_{192} & \cup & \left[26\right]_{96} & \cup & \left[14\right]_{24} & \cup & \left[5\right]_{6}   \\
      };
    \end{tikzpicture}
    \end{equation*}
\begin{theorem}
The function $T_R^*:\mathbb{Z}\rightarrow \mathbb{Z}$ maps the congruence classes of $3$ as shown in the diagram below:
\begin{subequations}
    \begin{equation}\label{Eqn03}
    \begin{tikzpicture}[baseline=(current  bounding  box.center)]
      \matrix (m) [matrix of math nodes,row sep=2em,column sep=-5pt]
      {
         [0]_3 & = & \left[0\right]_{12} & \cup & \left[18\right]_{48} & \cup & \left[138\right]_{192} & \cup & \left[42\right]_{192} & \cup & \left[90\right]_{96} & \cup & \left[6\right]_{24} & \cup & \left[3\right]_{6}   \\
         {} & {} & \left[0\right]_{9} & \cup & \left[3\right]_{9} & \cup & \left[6\right]_{9} & \cup & \left[0\right]_{3} & \cup & \left[8\right]_{9} & \cup & \left[2\right]_{9} & \cup & \left[5\right]_{9}   \\
      };
      \path[-stealth]
        (m-1-3) edge node [left] {$T_R$} (m-2-3)
        (m-1-5) edge node [left] {$T_R^{\,2}$} (m-2-5)
        (m-1-7) edge node [left] {$T_R^{\,3}$} (m-2-7)
        (m-1-9) edge node [left] {$T_R^{\,3}$} (m-2-9)
        (m-1-11) edge node [left] {$T_R^{\,3}$} (m-2-11)
        (m-1-13) edge node [left] {$T_R^{\,2}$} (m-2-13)
        (m-1-15) edge node [left] {$T_R$} (m-2-15);
      \matrix (m) [matrix of math nodes, row sep=2em, column sep=-4pt]
      {
        {} & {} & {} & {} & {} \\[36pt]
        \hspace{0.45in}& \underbrace{\hspace{1.33in}} & \hspace{0.85in} & \underbrace{\hspace{1.15in}} & {} \\
      };
      \matrix (m) [matrix of math nodes, row sep=2em, column sep=-4pt]
      {
        {} & {} & {} & {} & {} \\[63pt]
        \hspace{0.70in} & \left[0\right]_3 & \hspace{1.85in} & \left[2\right]_3 & \hspace{0.1in} \\
      };
    \end{tikzpicture}
    \end{equation}
    \begin{equation}\label{Eqn13}
    \begin{tikzpicture}[baseline=(current  bounding  box.center)]
      \matrix (m) [matrix of math nodes,row sep=2em,column sep=-5pt]
      {
         [1]_3 & = & \left[4\right]_{12} & \cup & \left[34\right]_{48} & \cup & \left[10\right]_{192} & \cup & \left[106\right]_{192} & \cup & \left[58\right]_{96} & \cup & \left[22\right]_{24} & \cup & \left[1\right]_{6}   \\
         {} & {} & \left[3\right]_{9} & \cup & \left[6\right]_{9} & \cup & \left[0\right]_{9} & \cup & \left[1\right]_{3} & \cup & \left[5\right]_{9} & \cup & \left[8\right]_{9} & \cup & \left[2\right]_{9}   \\
      };
      \path[-stealth]
        (m-1-3) edge node [left] {$T_R$} (m-2-3)
        (m-1-5) edge node [left] {$T_R^{\,2}$} (m-2-5)
        (m-1-7) edge node [left] {$T_R^{\,3}$} (m-2-7)
        (m-1-9) edge node [left] {$T_R^{\,3}$} (m-2-9)
        (m-1-11) edge node [left] {$T_R^{\,3}$} (m-2-11)
        (m-1-13) edge node [left] {$T_R^{\,2}$} (m-2-13)
        (m-1-15) edge node [left] {$T_R$} (m-2-15);
      \matrix (m) [matrix of math nodes, row sep=2em, column sep=-4pt]
      {
       {} & {} & {} & {} & {} \\[36pt]
        \hspace{0.40in}& \underbrace{\hspace{1.30in}} & \hspace{0.90in} & \underbrace{\hspace{1.20in}} & {} \\
      };
      \matrix (m) [matrix of math nodes, row sep=2em, column sep=-4pt]
      {
       {} & {} & {} & {} & {} \\[63pt]
        \hspace{0.53in} & \left[0\right]_3 & \hspace{1.92in} & \left[2\right]_3 & {} \\
      };
    \end{tikzpicture}
    \end{equation}
    \begin{equation}\label{Eqn23}
    \begin{tikzpicture}[baseline=(current  bounding  box.center)]
      \matrix (m) [matrix of math nodes,row sep=2em,column sep=-5pt]
      {
         [2]_3 & = & \left[8\right]_{12} & \cup & \left[2\right]_{48} & \cup & \left[74\right]_{192} & \cup & \left[170\right]_{192} & \cup & \left[26\right]_{96} & \cup & \left[14\right]_{24} & \cup & \left[5\right]_{6}   \\
         {} & {} & \left[6\right]_{9} & \cup & \left[0\right]_{9} & \cup & \left[3\right]_{9} & \cup & \left[2\right]_{3} & \cup & \left[2\right]_{9} & \cup & \left[5\right]_{9} & \cup & \left[8\right]_{9}   \\
      };
      \path[-stealth]
        (m-1-3) edge node [left] {$T_R$} (m-2-3)
        (m-1-5) edge node [left] {$T_R^{\,2}$} (m-2-5)
        (m-1-7) edge node [left] {$T_R^{\,3}$} (m-2-7)
        (m-1-9) edge node [left] {$T_R^{\,3}$} (m-2-9)
        (m-1-11) edge node [left] {$T_R^{\,3}$} (m-2-11)
        (m-1-13) edge node [left] {$T_R^{\,2}$} (m-2-13)
        (m-1-15) edge node [left] {$T_R$} (m-2-15);
      \matrix (m) [matrix of math nodes, row sep=2em, column sep=-4pt]
      {
        {} & {} & {} & {} & {} \\[36pt]
        \hspace{0.45in}& \underbrace{\hspace{1.20in}} & \hspace{0.90in} & \underbrace{\hspace{1.20in}} & {} \\
     };
      \matrix (m) [matrix of math nodes, row sep=2em, column sep=-4pt]
      {
        {} & {} & {} & {} & {} \\[63pt]
        \hspace{0.62in} & \left[0\right]_3 & \hspace{1.88in} & \left[2\right]_3 & \hspace{0.1in} \\
      };
    \end{tikzpicture}
    \end{equation}
\end{subequations}
\end{theorem}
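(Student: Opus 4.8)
The plan is to treat the statement as a finite verification and to organize it so that the symmetry visible in the three diagrams does most of the work. The definition of $T_R^*$ assigns to each of the seven pieces of the partition \eqref{EqnZpart} a fixed exponent $\ell\in\{1,2,3\}$: namely $\ell=1$ on $[1]_2\cup[0]_4$, $\ell=2$ on $[6]_8\cup[2]_{16}$, and $\ell=3$ on $[26]_{32}\cup[10]_{64}\cup[42]_{64}$. Intersecting each piece with the three residue classes modulo $3$ produces exactly the twenty-one fine classes displayed in the finer partition preceding the theorem. So it suffices to take a representative $mj+b$ of each fine class $[b]_m$, apply $T_R$ to it $\ell$ times, and check that the result is the single arithmetic progression named by the target class. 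I would carry this out column by column.

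For each fine class the computation is mechanical: at every one of the $\ell$ steps the residue of the current iterate modulo $4$ (or modulo $2$, when that iterate is odd) is the same for all elements of the class and selects which branch of \eqref{EqnTR} to apply. For instance $[18]_{48}\subset[2]_4$ gives $T_R(48j+18)=12j+4\in[0]_4$, and then $T_R(12j+4)=9j+3$, so $[18]_{48}\xrightarrow{T_R^{\,2}}[3]_9$, as claimed. The one point that must be checked rather than assumed is that the chosen moduli $12,48,192,96,24,6$ are large enough that every intermediate iterate has a constant residue modulo $4$ across the whole class; this is what guarantees that $T_R^*$ restricted to a single fine class is one fixed composition of branches, hence a single affine map with a single arithmetic progression as image.

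To avoid recomputing all twenty-one cases, I would exploit the shift symmetry. Within any one column the three fine classes, one from each row, form an arithmetic progression whose common difference is the $2$-power modulus defining that column (for example $[0]_{12},[4]_{12},[8]_{12}$ differ by $4$, and $[10]_{192},[74]_{192},[138]_{192}$ differ by $64$). I would track how this input offset propagates: the branch $n\mapsto 3n/4$ scales an offset by $3/4$, the branch $n\mapsto (n-2)/4$ scales it by $1/4$, and the branch $n\mapsto(3n+1)/2$ scales it by $3/2$. In every column these offsets stay divisible by the relevant power of $2$ through all intermediate steps, so the three rows follow identical branch sequences, and the net offset between consecutive rows of images is $3$ modulo $9$. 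Hence one row per column, say the $[0]_3$ row, determines the other two by this uniform shift. The same accounting explains the structural features of the diagrams: the outer six columns send each congruence class of $3$ into $[0]_3$ or $[2]_3$ as cyclically shifted mod-$9$ pieces, while in the middle column $[42]_{64}$ all three steps use the branch $n\mapsto(n-2)/4$, the offset $64$ collapses to $1$, and each class maps onto a full residue class of $3$.

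The calculations are otherwise routine, so the only real obstacle is the bookkeeping of branches across the two- and three-step columns: one must confirm that the exponent prescribed by $T_R^*$ corresponds to a branch sequence valid for the entire class, with no element diverted to a different branch at an intermediate iterate. Once the offset-divisibility described above is verified, both the branch-sequence consistency and the resulting images follow at once.
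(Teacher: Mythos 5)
Your proposal is correct, and at its core it is the same proof as the paper's: the paper disposes of the theorem by declaring that it ``follows by verification of the individual mappings using the definition of $T_R$,'' exhibiting a single sample computation $T_R^3(192j+74)=T_R^2(48j+18)=T_R(12j+4)=9j+3$ and leaving the remaining twenty routine affine computations to the reader. What you add is organizational, and it is a genuine improvement in two respects. First, you make explicit the well-definedness point the paper leaves silent: each fine class lies inside a single piece of the partition (\ref{EqnZpart}), so all of its elements, and all of their intermediate iterates, select the same branch of (\ref{EqnTR}) at every step; this is exactly what makes $T_R^*$ restricted to such a class a single affine map and licenses computing with a generic representative $mj+b$. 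Second, your offset-propagation observation (differences scale by $3/4$, $1/4$, or $3/2$ according to branch) correctly cuts the work from twenty-one computations to seven, one per column: adding the defining $2$-power modulus of a column to the input adds exactly $3$ to the image in the six outer columns (e.g.\ $64\mapsto 16\mapsto 4\mapsto 3$ for $[10]_{64}$) and adds $1$ in the $[42]_{64}$ column, which is the collapse you describe. One caveat: adding that modulus $m$ advances the mod-$3$ row by $+1$ when $m\in\{4,16,64\}$ but by $-1$ when $m\in\{2,8,32\}$ (since $2,8,32\equiv 2 \pmod 3$), so ``consecutive rows of images differ by $3$'' must be read along the arithmetic progression rather than in the row order $[0]_3,[1]_3,[2]_3$; in the last three columns the image shift from the $[0]_3$ row to the $[1]_3$ row is $-3 \pmod 9$ (compare $[90]_{96}\rightarrow[8]_9$ with $[58]_{96}\rightarrow[5]_9$). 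This is a bookkeeping sign, not a gap; with that reading your argument is sound and verifies every entry of the three diagrams.
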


\begin{proof}
The proof follows by verification of the individual mappings using the definition of $T_R$ from Equation (\ref{EqnTR}).  As an example, to verify the third entry in (\ref{Eqn23}), we have
\begin{align*}
T_R^3(192j+74) &= T_R^2(48j+18) \\
    &= T_R(12j+4) \\
    &= 9j+3.
\end{align*}
\end{proof}

While we can express $T_R^*$ as a piecewise function\footnote{$T_R^*(n)=(3n+1)/2$ if $n\equiv 1$ (mod $2$), $3n/4$ if $n\equiv 0$ (mod $4$), $(3n-2)/8$, if $n\equiv 6$ (mod $8$), $(3n-6)/16$ if $n\equiv 2$ (mod $16$), $(3n-14)/32$ if $n\equiv 26$ (mod $32$), $(3n-30)/64$ if $n\equiv 10$ (mod $64$), $(n-42)/64$ if $n\equiv 42$ (mod $64$).}, the relationships displayed in (\ref{Eqn03})--(\ref{Eqn23}) reveal interesting symmetries that are not apparent from a formula. For example, the mapping sends the first three subsets of each partition to the three sets $\left[0\right]_9, \left[3\right]_9, \text{\ and\ }\left[6\right]_9$,  permuted cyclically.  A similar pattern appears for the last three subsets in each group.

Mappings (\ref{Eqn03}) and (\ref{Eqn23}) also reflect a striking symmetry between the sets $\left[0\right]_3$ and $\left[2\right]_3$ that does not involve the set $\left[1\right]_3$.  In fact, the next theorem shows that numbers congruent to $\modd 1 3$ are transient in $T_R^*$-trajectories in the same way that numbers congruent to $\modd 0 3$ are transient in $T$-trajectories, as expressed in Proposition \ref{PropA}.

\begin{theorem}\label{ThmTRStar}
Every $T_R^*$-trajectory consists of finitely many numbers congruent to $\modd 1 3$ followed only by numbers congruent to either $0$ or $\modd 2 3$.
\end{theorem}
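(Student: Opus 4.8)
The plan is to combine two facts that can be read directly off the preceding theorem by reducing its image classes modulo $3$. First, in \eqref{Eqn03} and \eqref{Eqn23} every image class is contained in $[0]_9\cup[3]_9\cup[6]_9=[0]_3$ or in $[2]_9\cup[5]_9\cup[8]_9=[2]_3$, so $T_R^*\bigl([0]_3\cup[2]_3\bigr)\subseteq[0]_3\cup[2]_3$; that is, $[0]_3\cup[2]_3$ is forward-invariant under $T_R^*$. Second, in \eqref{Eqn13} the seven subsets of $[1]_3$ are sent to classes congruent to $0,0,0,1,2,2,2\pmod 3$, respectively, so the \emph{unique} subset of $[1]_3$ whose image again meets $[1]_3$ is the fourth one, $[106]_{192}$, which $T_R^3$ carries onto $[1]_3$.

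From the first fact it follows at once that the numbers congruent to $\modd 1 3$ in any trajectory form an initial segment: as soon as an iterate lies in $[0]_3\cup[2]_3$, forward-invariance keeps every later iterate there, so the orbit can never re-enter $[1]_3$. It therefore suffices to rule out a trajectory all of whose terms lie in $[1]_3$. If $(n_i)_{i\ge 0}$ were such a trajectory, then each $n_{i+1}=T_R^*(n_i)$ would lie in $[1]_3$, and by the second fact this forces $n_i\in[106]_{192}$ for every $i$.

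The crux is that $[106]_{192}$ is precisely the region on which $T_R^*$ contracts toward $0$. Since $106\equiv 42\pmod{64}$ and $192=3\cdot 64$, we have $[106]_{192}\subseteq[42]_{64}$, so by the definition of $T_R^*$ (equivalently, by the last case of its piecewise formula) $T_R^*(n)=\tfrac{n-42}{64}$ there. As $0\notin[106]_{192}$, every such $n$ satisfies $|n|\ge 1$, whence
\[
|T_R^*(n)|=\frac{|n-42|}{64}\le\frac{|n|+42}{64}<|n|,
\]
the final inequality being equivalent to $42<63|n|$. Thus along the hypothetical all-$[1]_3$ trajectory the nonnegative integers $|n_0|>|n_1|>|n_2|>\cdots$ would strictly decrease without bound, which is impossible. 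This contradiction shows that every trajectory contains only finitely many terms in $[1]_3$, and by the initial-segment observation they all precede the terms in $[0]_3\cup[2]_3$, which proves the theorem.

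I expect the only real subtlety to be the sign handling in the contraction step—using $|n-42|\le|n|+42$ so that the estimate covers negative as well as positive iterates—together with the bookkeeping needed to confirm from \eqref{Eqn03}--\eqref{Eqn23} that $[106]_{192}$ is genuinely the unique subset of $[1]_3$ returning to $[1]_3$. Everything else is immediate once the preceding theorem is in hand.
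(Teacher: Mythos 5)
Your proposal is correct and follows essentially the same route as the paper: forward-invariance of $[0]_3\cup[2]_3$ read off from \eqref{Eqn03} and \eqref{Eqn23}, identification of $[106]_{192}$ as the unique subset of $[1]_3$ returning to $[1]_3$ via \eqref{Eqn13}, and the contraction $T_R^*(n)=(n-42)/64$ yielding a descent on absolute values. The only difference is cosmetic—the paper invokes a least element $m$ of the iterates lying in $[1]_3$ and notes $T_R^*(m)$ must leave, while you phrase the same well-ordering argument as ruling out an infinite strictly decreasing sequence of nonnegative integers.
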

\begin{proof}
By (\ref{Eqn03}) and (\ref{Eqn23}), $T_R^*$ maps a number in $[0]_3$ or  $[2]_3$ to a number in one of those same two sets. So we need only show that every number in $[1]_3$ eventually iterates out of $[1]_3$.  From (\ref{Eqn13}), if $n\in [1]_3$ and $T_R^*(n)\in [1]_3$, then  $n=192j+106$ for some integer $j$.  In that case, $T_R^*(n)=T_R^3(n)=(n-42)/64$, from which $|T_R^*(n)|\leq|\frac{43n}{64}|<|n|$. Any sequence of $T_R^*$-iterates contained in $[1]_3$ therefore has decreasing magnitudes and so must have an element $m$ of least absolute value.  Then $T_R^*(m)$ is in $[0]_3$ or $[2]_3$.
\end{proof}

\section{Reversion to \texorpdfstring{$T$}{T}-trajectories}\label{SecReversion}

We have shown that the refined $3x+1$ conjecture (Conjecture \ref{ConjR}) and the special properties of $T_R$ reveal patterns that are not easily discerned from the standard formulation of the conjecture.  While it seems advantageous to study $T_R$ instead of $T$, it may also be of interest to transform results about $T_R$-trajectories back into the original setting of $T$-trajectories.  We do so in this section, beginning with a new characterization of $T$-trajectories.

\begin{theorem}\label{ThmB}
The $T$-trajectory of every nonzero integer $n$ has the following structure: finitely many numbers congruent to $\modd 0 3$, followed by numbers congruent to $2$ or $\modd 8 9$ except for isolated numbers (i.e., no two in succession) that are congruent to $\modd 1 3$ or isolated numbers congruent to $\modd 5 9$.
\end{theorem}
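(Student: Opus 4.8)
The plan is to assemble the full structure from the three congruence facts already established---Proposition \ref{PropA}, Proposition \ref{PropB}, and Proposition \ref{PropC}---together with the conjugacy $S(n)=3n+2$ relating $T_R$ and $F$. Proposition \ref{PropA} already supplies the initial block of finitely many multiples of $3$, after which every iterate lies in $[1]_3\cup[2]_3$, and Proposition \ref{PropB} already tells us the $[1]_3$ iterates occur only in isolation. So the real content is to resolve the $[2]_3$ iterates---those constituting the $F$-trajectory---into the finer classes $[2]_9$, $[5]_9$, and $[8]_9$, using the partition $[2]_3=[2]_9\cup[5]_9\cup[8]_9$.

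The key observation is how $S$ acts on residues modulo $3$: a one-line computation gives $S([0]_3)=[2]_9$, $S([1]_3)=[5]_9$, and $S([2]_3)=[8]_9$. By Equation (\ref{EqnSofT}), the elements of the $F$-trajectory of $S(n)$ are exactly the images under $S$ of the $T_R$-trajectory of $n$, so I would transport the mod-$3$ structure of the $T_R$-trajectory, governed by Proposition \ref{PropC}, through $S$. Proposition \ref{PropC} says $[1]_3$ is isolated under $T_R$; under $S$ this becomes the statement that $[5]_9$ is isolated within the $F$-trajectory, while the remaining $F$-iterates, being $S$-images of $[0]_3\cup[2]_3$ numbers, lie in $[2]_9\cup[8]_9$. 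At this point the $F$-trajectory---equivalently, the subsequence of $[2]_3$ iterates inside the $T$-trajectory---already has the claimed form: a backbone of $[2]_9\cup[8]_9$ numbers punctuated by isolated $[5]_9$ numbers.

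The final step is to re-insert the $[1]_3$ iterates that $F$ skips over, recovering the full $T$-trajectory. By the definition of $F$ in (\ref{EqnF}), consecutive $F$-iterates are separated by either one step of $T$ (the $n\equiv 5\pmod 6$ case, no intermediate) or two steps (the $n\equiv 2\pmod 6$ case, whose single intermediate $T(6j+2)=3j+1$ lies in $[1]_3$); hence between consecutive $F$-elements the $T$-trajectory inserts at most one number, and that number is in $[1]_3$. This reconfirms the isolation of $[1]_3$ from Proposition \ref{PropB} and shows the insertions cannot destroy the isolation of $[5]_9$: since no two consecutive $F$-elements are both in $[5]_9$, and an inserted term is always in $[1]_3$ rather than $[5]_9$, two $[5]_9$ numbers can never become adjacent in the full trajectory. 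Combining the initial $[0]_3$ block, the $[2]_9\cup[8]_9$ backbone, and the two isolated exceptional types yields exactly the asserted structure.

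The work here is bookkeeping rather than a single hard step; the one place to be careful is keeping the two notions of \emph{isolated} straight---isolation within the $F$-trajectory (no two consecutive $[2]_3$ iterates both in $[5]_9$) versus isolation in the full $T$-trajectory after the $[1]_3$ insertions---and verifying that a $[5]_9$ number and a $[1]_3$ number are permitted to sit adjacent (as the $n=65$ trajectory illustrates) while two like-typed exceptions never are. I would double-check the three residue images of $S$ and the claim that each two-step shortcut conceals exactly one $[1]_3$ number, since an error there would be the only way the clean $[2]_9\cup[8]_9$ backbone could fail.
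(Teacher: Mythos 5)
Your proposal is correct and takes essentially the same route as the paper's proof: Proposition \ref{PropA} supplies the initial block of multiples of $3$, Proposition \ref{PropB} gives the isolation of $[1]_3$, and Proposition \ref{PropC} transported through $S$ (via $S(3j+1)=9j+5$) gives the isolation of $[5]_9$, with the remaining terms lying in $[2]_9\cup[8]_9$. The only difference is that you make explicit, through the insertion bookkeeping between consecutive $F$-iterates, the step the paper leaves implicit---namely that isolation of $[5]_9$ within the $F$-trajectory transfers to isolation within the full $T$-trajectory---which is a sound and welcome elaboration rather than a different argument.
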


\begin{proof}
 From Proposition \ref{PropA}, in the $T$-trajectory for a nonzero $n$, numbers congruent to $\modd 0 3$ occur only as initial values, and there are finitely many such numbers. From Proposition  \ref{PropB}, numbers congruent to $\modd 1 3$ are isolated in the subsequent portion of the trajectory. The remaining numbers are those congruent to $\modd 2 3$.  We then need only show that numbers congruent to $\modd 5 9$ are isolated within that set.  But by Proposition \ref{PropC}, numbers of the form $3j+1$ never appear consecutively in a $T_R$-trajectory, which implies that numbers of the form $S(3j+1)=9j+5$ never appear consecutively in a $T$-trajectory. This says that numbers congruent to $\modd 5 9$ are isolated in the $T$-trajectory.
\end{proof}

\begin{example}
The structure described in Theorem \ref{ThmB} is illustrated in Figure \ref{FigClasses}, for the trajectory of $156$.
\end{example}
\begin{figure}[H]
\begin{center}
    \scalebox{0.65}{\includegraphics{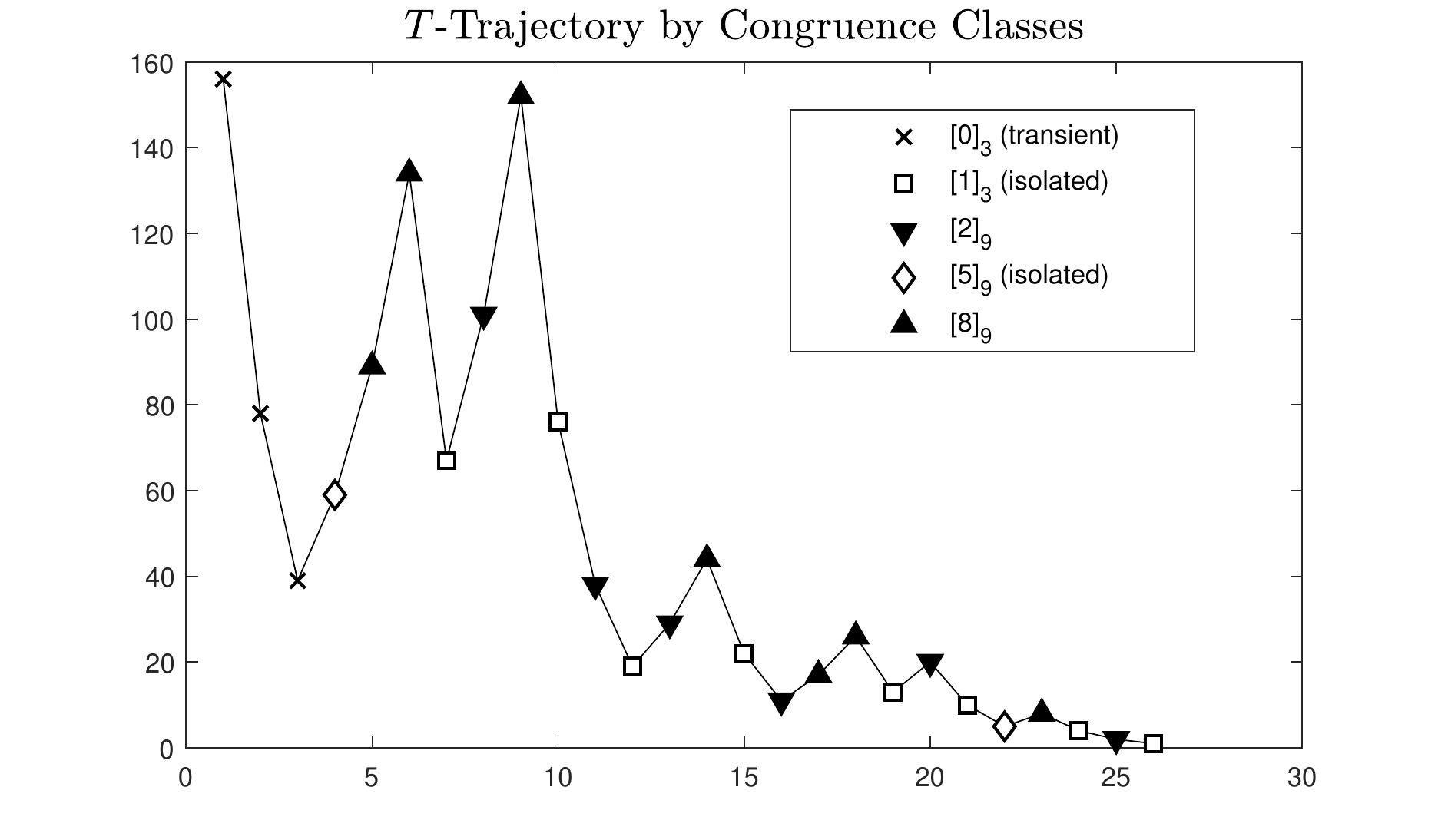}}
\caption{The $T$-trajectory of $156$, showing the properties stated in Theorem \protect\ref{ThmB}.}
\label{FigClasses}
\end{center}
\end{figure}

We can demonstrate even more clearly the distinctive role in $T$-trajectories of numbers congruent to $2$ or $\modd 8 9$ by the following result for an accelerated iteration whose long-term behavior involves only those numbers.
As in Equation (\ref{EqnZpart}), we use a special partition, this time for the set  $[2]_3$.  We also express this partition with residues of least absolute value in order to more clearly show the pattern.
\begin{align*}
[2]_3
    &= [5]_6 \cup [2]_{6} \\
    &= [5]_6 \cup [2]_{12} \cup [8]_{12} \\
    &= [5]_6 \cup [2]_{12} \cup [20]_{24} \cup [8]_{24} \\
    &= [5]_6 \cup [2]_{12} \cup [20]_{24} \cup [8]_{48} \cup [32]_{48} \\
    &= [5]_6 \cup [2]_{12} \cup [20]_{24} \cup [8]_{48} \cup [80]_{96} \cup [32]_{96}  \\
    &= [5]_6 \cup [2]_{12} \cup [20]_{24} \cup [8]_{48} \cup [80]_{96} \cup [32]_{192} \cup [128]_{192} \\
    &= [-1]_6 \cup [2]_{12} \cup [-4]_{24} \cup [8]_{48} \cup [-16]_{96} \cup [32]_{192} \cup [-64]_{192}
\end{align*}

\begin{theorem}\label{ThmTstar}
Define $T^*:\left[2\right]_3 \rightarrow \mathbb{Z}$ by
\begin{displaymath}
T^*(n)=
\begin{dcases}
  T(n)=\dfrac{3n+1}{2}, & \text{if $n\equiv -1 \pmod 6$}\\
  T^{\,2}(n)=\dfrac{3n+2}{4}, & \text{if $n \equiv 2 \pmod {12}$} \\
  T^{\,3}(n)=\dfrac{3n+4}{8}, & \text{if $n\equiv -4 \pmod {24}$} \\
  T^{\,4}(n)=\dfrac{3n+8}{16}, & \text{if $n\equiv 8 \pmod {48}$} \\
  T^{\,5}(n)=\dfrac{3n+16}{32}, & \text{if $n\equiv -16 \pmod {96}$} \\
  T^{\,6}(n)=\dfrac{3n+32}{64}, & \text{if $n\equiv 32 \pmod {192}$} \\
  T^{\,6}(n)=\dfrac{n}{64}, & \text{if $n\equiv -64 \pmod {192}$}.
\end{dcases}
\end{displaymath}
Then
\begin{enumerate}[label={(\roman*)}]
\item
the range of $T^*$ is $\left[2\right]_3$, \label{ThmTstar1}
\item
every $T^*$-trajectory consists of finitely many numbers congruent to $\modd 5 9$ followed only by numbers congruent to either $2$ or $\modd 8 9$. \label{ThmTstar2}
\end{enumerate}
\end{theorem}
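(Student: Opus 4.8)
The plan is to recognize $T^*$ as the $S$-conjugate of the map $T_R^*$, and then to transport through the bijection $S(n)=3n+2$ everything already established about $T_R^*$. Concretely, I would first prove the identity
\[
T^* = S \circ T_R^* \circ S^{-1} \qquad \text{on } [2]_3,
\]
after which both conclusions become direct translations of facts about $T_R^*$.

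To establish this conjugacy I would verify two things. First, that $S$ carries the seven defining congruence classes of $T_R^*$ onto the seven defining classes of $T^*$: a short computation gives $S([1]_2)=[5]_6$, $S([0]_4)=[2]_{12}$, $S([6]_8)=[20]_{24}$, $S([2]_{16})=[8]_{48}$, $S([26]_{32})=[80]_{96}$, $S([10]_{64})=[32]_{192}$, and $S([42]_{64})=[128]_{192}$, which are exactly the classes named in the definition of $T^*$. Second, that on each such class the power of $T$ prescribed for $T^*$ equals the number of elementary $T$-steps accumulated by the corresponding power of $T_R$. Here I would invoke Equation \eqref{EqnSofT}, which gives $S(T_R^{\,k}(n))=F^{\,k}(S(n))$; since one application of $F$ amounts to one or two $T$-steps according to whether its argument is $\equiv 5$ or $\equiv 2 \pmod 6$, a branch-by-branch unfolding shows that $F^{\,q}$ collapses to precisely the exponent named in the definition of $T^*$. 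For instance, on $S([0]_4)=[2]_{12}$ a single $T_R$-step corresponds to the double step $T^2$, while on $S([10]_{64})=[32]_{192}$ three $T_R$-steps correspond to three consecutive double steps, that is $T^6$.

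Granting the conjugacy, part (i) follows because the range of $T^*$ is the $S$-image of the range of $T_R^*$, and the mapping diagrams \eqref{Eqn03}--\eqref{Eqn23} show that the image of $[1]_3$ under $T_R^*$ already exhausts $\mathbb{Z}$; hence the range of $T^*$ is $S(\mathbb{Z})=[2]_3$. As a self-contained alternative I would check directly that every branch of $T^*$ outputs a number $\equiv 2 \pmod 3$, and that the branch $n \equiv -64 \pmod{192}$ sends $n \mapsto n/64$, which alone surjects onto $[2]_3$. Part (ii) follows by transporting Theorem \ref{ThmTRStar} through $S$: computing $S([0]_3)=[2]_9$, $S([1]_3)=[5]_9$, and $S([2]_3)=[8]_9$, a $T_R^*$-trajectory consisting of finitely many terms in $[1]_3$ followed only by terms in $[0]_3 \cup [2]_3$ maps under $S$ to a $T^*$-trajectory with finitely many terms $\equiv 5 \pmod 9$ followed only by terms $\equiv 2$ or $\equiv 8 \pmod 9$; since $S$ is a bijection onto $[2]_3$, every $T^*$-trajectory arises in this way.

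The main obstacle is the bookkeeping in the conjugacy step: one must confirm, across all seven branches, that the number of elementary $T$-steps concealed inside each power of $F$ matches the exponent prescribed for $T^*$ (namely $1,2,3,4,5,6,6$), the delicate point being that a single $T_R$-step expands to one or two $T$-steps depending on the residue of the intermediate iterate modulo $6$. Everything else is a formal consequence of conjugacy together with results already in hand, so I would keep the direct surjectivity argument for part (i) in reserve to avoid making the range statement depend on reading the range of $T_R^*$ off the diagrams.
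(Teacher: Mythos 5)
Your proposal is correct and takes essentially the same route as the paper: the paper also transports the $T_R^*$ results through $S$, using Equations (\ref{EqnSofT}) and (\ref{EqnF}) to convert the diagrams (\ref{Eqn03})--(\ref{Eqn23}) into mod-$9$ mapping diagrams that coincide branch-by-branch with the definition of $T^*$, then reads off part (i) from those diagrams and obtains part (ii) by applying $S$ to Theorem \ref{ThmTRStar}. Your packaging of the same bookkeeping as the explicit conjugacy $T^* = S\circ T_R^*\circ S^{-1}$, and your backup direct surjectivity check via the branch $n\equiv -64 \pmod{192}$, are only organizational differences.
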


\begin{proof}
By applying $S(x)=3x+2$ to the sets shown in (\ref{Eqn03})--(\ref{Eqn23}) and making use of Equations (\ref{EqnSofT}) and (\ref{EqnF}), we obtain the following mapping of numbers congruent to $2$ (mod $3$), listed by congruence class (mod $9$). The mappings shown in (\ref{Eqn29})--(\ref{Eqn89}) coincide with the definition of $T^*$, as may be verified directly from the definition of $T$.  Result \emph{\ref{ThmTstar1}} is then a consequence of  (\ref{Eqn29})--(\ref{Eqn89}), and \emph{\ref{ThmTstar2}} follows by applying $S$ to the numbers described in Theorem \ref{ThmTRStar}.
\begin{subequations}
    \begin{equation}\label{Eqn29}
    \vspace{-24pt}
    \begin{tikzpicture}[baseline=(current  bounding  box.center)]
          \matrix (m) [matrix of math nodes,row sep=2em,column sep=-5pt]
          {
             [2]_9 & = & \left[2\right]_{36} & \cup & \left[56\right]_{144} & \cup & \left[416\right]_{576} & \cup & \left[128\right]_{576} & \cup & \left[272\right]_{288} & \cup & \left[20\right]_{72} & \cup & \left[11\right]_{18}   \\
             {} & {} & \left[2\right]_{27} & \cup & \left[11\right]_{27} & \cup & \left[20\right]_{27} & \cup & \left[2\right]_{9} & \cup & \left[26\right]_{27} & \cup & \left[8\right]_{27} & \cup & \left[17\right]_{27}   \\
          };
          \path[-stealth]
            (m-1-3) edge node [left] {$T^{\,2}$} (m-2-3)
            (m-1-5) edge node [left] {$T^{\,4}$} (m-2-5)
            (m-1-7) edge node [left] {$T^{\,6}$} (m-2-7)
            (m-1-9) edge node [left] {$T^{\,6}$} (m-2-9)
            (m-1-11) edge node [left] {$T^{\,5}$} (m-2-11)
            (m-1-13) edge node [left] {$T^{\,3}$} (m-2-13)
            (m-1-15) edge node [left] {$T$} (m-2-15);
          \matrix (m) [matrix of math nodes, row sep=2em, column sep=-4pt]
          {
            {} & {} & {} & {} & {} \\[36pt]
            \hspace{0.40in}& \underbrace{\hspace{1.45in}} & \hspace{0.88in} & \underbrace{\hspace{1.48in}} & {} \\
          };
          \matrix (m) [matrix of math nodes, row sep=2em, column sep=-4pt]
          {
            {} & {} & {} & {} & {} \\[63pt]
            \hspace{0.55in} & \left[2\right]_9 & \hspace{2.15in} & \left[8\right]_9 & \hspace{0.1in} \\
          };
    \end{tikzpicture}
    \end{equation}
        \begin{equation}\label{Eqn59}
    \vspace{-24pt}
    \begin{tikzpicture}[baseline=(current  bounding  box.center)]
          \matrix (m) [matrix of math nodes,row sep=2em,column sep=-5pt]
          {
             [5]_9 & = & \left[14\right]_{36} & \cup & \left[104\right]_{144} & \cup & \left[32\right]_{576} & \cup & \left[320\right]_{576} & \cup & \left[176\right]_{288} & \cup & \left[68\right]_{72} & \cup & \left[5\right]_{18}   \\
             {} & {} & \left[11\right]_{27} & \cup & \left[20\right]_{27} & \cup & \left[2\right]_{27} & \cup & \left[5\right]_{9} & \cup & \left[17\right]_{27} & \cup & \left[26\right]_{27} & \cup & \left[8\right]_{27}   \\
          };
          \path[-stealth]
            (m-1-3) edge node [left] {$T^{\,2}$} (m-2-3)
            (m-1-5) edge node [left] {$T^{\,4}$} (m-2-5)
            (m-1-7) edge node [left] {$T^{\,6}$} (m-2-7)
            (m-1-9) edge node [left] {$T^{\,6}$} (m-2-9)
            (m-1-11) edge node [left] {$T^{\,5}$} (m-2-11)
            (m-1-13) edge node [left] {$T^{\,3}$} (m-2-13)
            (m-1-15) edge node [left] {$T$} (m-2-15);
          \matrix (m) [matrix of math nodes, row sep=2em, column sep=-4pt]
          {
            {} & {} & {} & {} & {} \\[36pt]
            \hspace{0.38in}& \underbrace{\hspace{1.55in}} & \hspace{0.85in} & \underbrace{\hspace{1.40in}} & {} \\
          };
          \matrix (m) [matrix of math nodes, row sep=2em, column sep=-4pt]
          {
            {} & {} & {} & {} & {} \\[63pt]
            \hspace{0.65in} & \left[2\right]_9 & \hspace{2.10in} & \left[8\right]_9 & \hspace{0.1in} \\
          };
    \end{tikzpicture}
    \end{equation}
\begin{equation}\label{Eqn89}
        \begin{tikzpicture}[baseline=(current  bounding  box.center)]
          \matrix (m) [matrix of math nodes,row sep=2em,column sep=-5pt]
          {
             [8]_9 & = & \left[26\right]_{36} & \cup & \left[8\right]_{144} & \cup & \left[224\right]_{576} & \cup & \left[512\right]_{576} & \cup & \left[80\right]_{288} & \cup & \left[44\right]_{72} & \cup & \left[17\right]_{18}   \\
             {} & {} & \left[20\right]_{27} & \cup & \left[2\right]_{27} & \cup & \left[11\right]_{27} & \cup & \left[8\right]_{9} & \cup & \left[8\right]_{27} & \cup & \left[17\right]_{27} & \cup & \left[26\right]_{27}   \\
          };
          \path[-stealth]
            (m-1-3) edge node [left] {$T^{\,2}$} (m-2-3)
            (m-1-5) edge node [left] {$T^{\,4}$} (m-2-5)
            (m-1-7) edge node [left] {$T^{\,6}$} (m-2-7)
            (m-1-9) edge node [left] {$T^{\,6}$} (m-2-9)
            (m-1-11) edge node [left] {$T^{\,5}$} (m-2-11)
            (m-1-13) edge node [left] {$T^{\,3}$} (m-2-13)
            (m-1-15) edge node [left] {$T$} (m-2-15);
          \matrix (m) [matrix of math nodes, row sep=2em, column sep=-4pt]
          {
            {} & {} & {} & {} & {} \\[36pt]
            \hspace{0.40in}& \underbrace{\hspace{1.45in}} & \hspace{0.85in} & \underbrace{\hspace{1.40in}} & {} \\
          };
          \matrix (m) [matrix of math nodes, row sep=2em, column sep=-4pt]
          {
            {} & {} & {} & {} & {} \\[63pt]
            \hspace{0.60in} & \left[2\right]_9 & \hspace{2.05in} & \left[8\right]_9 & \hspace{0.1in} \\
          };
        \end{tikzpicture}
    \end{equation}
\end{subequations}
\end{proof}

\begin{example}
To illustrate Theorem (\ref{ThmTstar}), we consider again the $T$-trajectory of $156$, which was shown in Figure \ref{FigClasses}.  That trajectory is
\[
\left(156, 78, 39, 59, 89, 134, 67, 101, 152, 76, 38, 19, 29, 44, 22, 11, 17, 26, 13, 20, 10, 5, 8, 4, 2, 1, \ldots \right).
\]
The first three terms are elements of the transient set $[0]_3$, so the $T^*$-trajectory begins with 59 and is as follows:
\[
\left(59, 89, 134, 101, 152, 29, 44, 17, 26, 20, 8, 2, \ldots\right).
\]
Note that the first term is congruent to $5$ (mod $9$) and all subsequent terms are congruent to either $2$ or $8$ (mod $9$). However, not all terms of a $T$-trajectory that are congruent to $2$ or $8$ (mod $9$) appear in the accelerated $T^*$-trajectory.  In this example, $38$ and $11$ are two such numbers that are bypassed in the iteration by $T^*$.
\end{example}

\section{Additional computational results}\label{SecComp}

It it common in the literature (see, e.g., Lagarias \cite{Lag0}) to let $\sigma_\infty(n)$ denote the total stopping time of $n$, which is defined to be the least $k$ for which $T^k(n)=1$, if such a $k$ exists, and $\infty$ otherwise. We denote this here by $\sigma_T(n)$ and define the analogous quantity $\sigma_{T_R}(n)$ to be the least $k$ for which $T_R^k(n)=0$, if such $k$ exists, and $\infty$ otherwise.

Table \ref{TableA} compares $\sigma_T(n)$ and $\sigma_{T_R}(S^{-1}(n))$  for numbers $n$ of increasing trajectory length and with $n\equiv \modd{2} {3}$.   The difference in the two quantities is equal to the number of terms congruent to $1$ (mod $3$) in the $T$-trajectory of $n$.

We can obtain a heuristic estimate for the ratio of these two values by noting that a number  congruent to $2$ (mod $3$) is either of the form $6j+2$, which iterates to $3j+1$, or the form $6j+5$, which iterates to $9j+8$.  So the probability is $1/2$ that a randomly chosen number congruent to $2$ (mod $3$) iterates to a number congruent to $1$ (mod $3$).  By Proposition \ref{PropB}, numbers congruent to $1$ (mod $3$) never appear in succession, so if $P_k$ denotes the probability that the $k^{th}$ term in a $T$-trajectory is congruent to $1$ (mod $3$), we have $P_k = P_{k-1}\cdot 0 + \left(1-P_{k-1}\right)\cdot \frac{1}{2} = \frac{1}{2} \left(1-P_{k-1}\right)$.
By iterating, $P_k=\frac{1}{2}-\frac{1}{4}+\frac{1}{8}-\ldots$, which approaches $\frac{1}{3}$ as $k\rightarrow\infty$.  So assuming sufficient mixing of numbers to justify treating them as randomly congruent to $1$ or $2$ (mod $3$), we expect long $T$-trajectories to have about $\frac{1}{3}$ of their terms congruent to $1$ (mod $3$) and $\frac{2}{3}$ of the terms congruent to $2$ (mod $3$). The ratio of $\sigma_{T_R}(S^{-1}(n))$ to $\sigma_T(n))$ should then be, on average, about $\frac{2}{3}$.  This result is approximately reflected by the values in Table \ref{TableA}.

\begin{table}[H]
\begin{center}
\begin{tabular}{rrrc} \toprule
      \multicolumn{1}{c}{$n$}
    & \multicolumn{1}{c}{$\sigma_T(n)$\hspace{0.6cm}}
    & \multicolumn{1}{c}{$\sigma_{T_R}(S^{-1}(n))$}
    & \multicolumn{1}{c}{$\frac{\sigma_{T_R}(S^{-1}(n))}{\sigma_T(n)}$}\\
        \midrule
        $41$\hspace{0.5cm}   & $70$~~ & $49$\hspace{0.7cm} & $0.700$ \\
        $1307$\hspace{0.5cm}  & $113$~~  &  $79$\hspace{0.7cm} & $0.699$ \\
        $13886$\hspace{0.5cm} & $164$~~  &  $114$\hspace{0.7cm} & $0.695$  \\
        $115547$\hspace{0.5cm} & $221$~~ & $157$\hspace{0.7cm} & $0.710$ \\
        $1256699$\hspace{0.5cm} & $329$~~ & $233$\hspace{0.7cm} & $0.708$ \\ \bottomrule
\end{tabular}
\caption{Total stopping times under $T$ and $T_R$.}{}
\label{TableA}
\end{center}
\end{table}

It is also interesting to examine the distribution of numbers in a $T_R$-trajectory among the congruence classes of $3$, since Proposition \ref{PropC} suggests that $T_R$-trajectories should have relatively few terms congruent to $1$ (mod $3$). Table \ref{TableB} provides examples of distributions of $T_R$-iterates among the three congruence classes.

\begin{table}[H]
\begin{center}
\begin{tabular}{rrrrr} \toprule
      \multicolumn{1}{c}{$n$}
    & \multicolumn{1}{c}{$\sigma_{T_R}(S^{-1}(n))$}
    & \multicolumn{1}{c}{$[0]_3$ count}
    & \multicolumn{1}{c}{$[1]_3$ count}
    & \multicolumn{1}{c}{$[2]_3$ count} \\
    \midrule
    $41$ & $49$\hspace{0.7cm} & $16$\hspace{0.5cm} & $5$\hspace{0.5cm} & $28$\hspace{0.5cm}  \\
    $1307$ & $79$\hspace{0.7cm} & $27$\hspace{0.5cm} & $7$\hspace{0.5cm} & $45$\hspace{0.5cm}  \\
    $13886$ & $114$\hspace{0.7cm} & $42$\hspace{0.5cm} & $7$\hspace{0.5cm} & $65$\hspace{0.5cm}  \\
    $115547$ & $157$\hspace{0.7cm} & $54$\hspace{0.5cm} & $9$\hspace{0.5cm} & $94$\hspace{0.5cm} \\
    $1256699$ & $233$\hspace{0.7cm} & $79$\hspace{0.5cm} & $16$\hspace{0.5cm} & $138$\hspace{0.5cm} \\ \bottomrule
\end{tabular}
\caption{Distribution of terms in $T_R$-trajectories by congruence classes (mod $3$).}{}
\label{TableB}
\end{center}
\end{table}

\section{Summary, and new questions}\label{SecSummary}

We have presented a streamlined formulation of the $3x+1$ conjecture that eliminates certain extraneous features of trajectories and reveals new structure.  It is the author's hope that the results presented here will help bring to light other  features of the $3x+1$ problem.  Some possible areas for subsequent investigation are

\begin{description}
\item[Implications of the $T_R$ partition mapping]
Do Equations (\ref{EqnPartitions}) for the $T_R$ partition mapping have further implications for the refined $3x+1$ conjecture?
\item[Asymptotic distributions of congruence classes]
The heuristic argument given above for the asymptotic proportion of numbers in a $T$-trajectory that are congruent to $2$ (mod $3$) gives a value of $\frac{2}{3}$, whereas experimental values seem to be consistently closer to $0.70$ or $0.71$.  Is there an explanation for this?  In a similar vein, can anything be said about asymptotic distributions of the congruence classes within $T_R$-trajectories as illustrated in Table \ref{TableB}?
\item[Parities relative to $0$ or $2$ (mod $3$)]
Mappings (\ref{Eqn03}) and (\ref{Eqn23}) express iterates of $T_R$ in terms only of numbers congruent to $0$ or $2$ (mod $3$), with intriguing symmetry between these two sets. Can more insight be gained by using $\{0,1\}$ parity vectors to represent membership in these sets in the same way that even-odd parities have been useful in analyzing $T$-trajectories?   (See, for example, Lagarias \cite{Lag0} and Rozier \cite{Roz}.)
\item[Extensions to $\mathbb{R}$ or $\mathbb{C}$]
Chamberland \cite{Cha} obtained interesting results from an extension of $T$ to a function on $\mathbb{R}$.  Others (e.g.,\ Letherman, et al.\ \cite{Let}) have studied extensions to $\mathbb{C}$. Can these methods be usefully applied to similar extensions of $T_R$, for example, by the following analogue of Chamberland's function that interpolates $T_R$ at the integers?

\[
f(x) =\frac{3x+1}{2} + \frac{x+1}{4}\cos\left(\frac{\pi x}{2}\right) - \frac{4x+3}{4}\cos^2\left(\frac{\pi x}{2}\right).
\]
\end{description}

\medskip

\noindent MSC2010: 11B83

\end{document}